\def\R{{\mathbb R}}
\def\N{{\mathbb N}}
\def\le{\leqslant}
\def\ge{\geqslant}
\newcommand{\re}{\mathrm{Re}}
\newcommand{\im}{\mathrm{Im}}
\newcommand{\eps}{\varepsilon}
\theoremstyle{plain}
\newtheorem{theorem}{Theorem}[section]
\newtheorem{lemma}[theorem]{Lemma}
\newtheorem{corollary}[theorem]{Corollary}
\newtheorem{proposition}[theorem]{Proposition}
\newtheorem{hyp}{Assumption}
\theoremstyle{definition}
\newtheorem{remark}[theorem]{Remark}
\newtheorem*{remark*}{Remark}
\numberwithin{equation}{section}
\begin{document}

\title[(In-)Stability properties of rotating BEC]{Stability and instability properties of rotating Bose-Einstein condensates}

\author[J. Arbunich]{Jack Arbunich}

\author[I. Nenciu]{Irina Nenciu}

\author[C. Sparber]{Christof Sparber}

\address[J. Arbunich]
{Department of Mathematics, Statistics, and Computer Science, M/C 249, University of Illinois at Chicago, 851 S. Morgan Street, Chicago, IL 60607, USA}
\email{jarbun2@uic.edu}

\address[I.~Nenciu]
{Department of Mathematics, Statistics, and Computer Science, M/C 249, University of Illinois at Chicago, 851 S. Morgan Street, Chicago, IL 60607, USA  \textit{and} Institute of Mathematics ``Simion Stoilow''
     of the Romanian Academy\\ 21, Calea Grivi\c tei\\010702-Bucharest, Sector 1\\Romania}
\email{nenciu@uic.edu}

\address[C.~Sparber]
{Department of Mathematics, Statistics, and Computer Science, M/C 249, University of Illinois at Chicago, 851 S. Morgan Street, Chicago, IL 60607, USA}
\email{sparber@uic.edu}

\begin{abstract}
We consider the mean-field dynamics of Bose-Einstein condensates in rotating harmonic traps and establish several stability and instability properties 
for the corresponding solution. We particularly emphasize the difference between the situation in which the trap is symmetric with respect to the rotation axis 
and the one where this is not the case.
\end{abstract}

\date{\today}

\subjclass[2000]{35Q41, 35B35, 35B07}
\keywords{Bose-Einstein condensate, Gross-Pitaevskii equation, rotation, vortices}

\thanks{This publication is based on work supported by the NSF through grant nos. DMS-1348092
 and DMS-1150427. 
The authors want to thank Robert Seiringer and Michael Loss for 
inspiring discussions}
\maketitle


\section{Introduction}\label{sec:intro}

In this note, we consider the dynamics of (harmonically) trapped {\it Bose-Einstein condensates} (BEC), subject to 
an external rotating force. Because of their ability to display quantum effects at the macroscopic scale, BEC 
have become an important subject of research, both experimentally and theoretically. In particular, the 
expression of quantum vortices in rapidly rotating BEC has been an ongoing topic of interest over the last few decades, see, e.g., \cite{Af, BWM, CaCl, Co, DaSt, Fe, SaUe} and the 
references therein.
It is well-known that in the mean-field regime, BEC can be accurately described by 
the celebrated {\it Gross-Pitaevskii equation} (GP) for $\psi$, the macroscopic wave function of the condensate, see \cite{LSY, RS1, RS2}. 
In dimensionless units, the GP equation with general nonlinearity reads
\begin{equation}\label{GP}
	i \partial_{t}\psi = - \frac{1}{2}\Delta \psi + V(x)\psi + a|\psi|^{2\sigma}\psi - (\Omega \cdot L) \psi\,, \quad \psi_{\vert{t=0}} = \psi_{0}(x)\,.
\end{equation}
Here, $a\in \R$, $\sigma>0$ and $(t,x)\in \R\times \R^3$ with $d=2$, or $3$, respectively. 
The former situation thereby corresponds to the case of an {\it effective} 
two-dimensional BEC, obtained via strong confining forces, see, e.g., \cite{MeSp} for more details. 
The external potential $V(x)\in \R$ is assumed to be {\it harmonic}, i.e., 
\begin{equation}\label{eq:pot}
V(x) = \frac{1}{2}\sum_{j=1}^{d}\omega_{j}^{2}x_{j}^{2},
\end{equation}
where the parameters $\omega_{j} \in \R\setminus \{0\}$ represent the respective trapping frequencies in each spatial direction. 
As we shall see, the smallest trapping frequency denoted by
\[
0<\omega \equiv  \min\limits_{j=1,\dots,d} \{\omega_{j}\},
\]
will play a particular role in our analysis. 

We further assume that the BEC is subject to a rotating force along a given {\it rotation axis} $\Omega\in \R^3$ and denote by 
\begin{equation*}
L = -i x \wedge \nabla,
\end{equation*}
the quantum mechanical angular momentum operator. Note that in dimension $d=2$, we always have
\begin{equation}\label{op2d}
\Omega \cdot L = -i|\Omega|\big(x_1 \partial_{x_2} - x_2\partial_{x_1}\big),
\end{equation}
corresponding to the case where $\Omega = (0,0,|\Omega|)\in \R^3$. 

The nonlinearity in \eqref{GP} describes the mean-field self-interaction of the condensate particles. The physically most relevant case 
is given by a cubic nonlinearity, i.e. $\sigma=1$, but for the sake of generality we shall in the following allow for more general $\sigma>0$. We shall also 
allow for both {\it attractive} $a<0$ and {\it repulsive} $a>0$ interactions, satisfying Assumption \ref{hyp1} below. 
Vortices are generally believed to be unstable in the former case (see, e.g., \cite{CaCl, CLS, SaUe}), 
while they are known to form stable lattice configurations in the latter \cite{Af, Co, Fe}. 

In this work, we shall not be interested in the dynamical features of individual vortices, but rather study 
bulk properties of the condensate, as described by \eqref{GP}. To this end, we recall that 
the natural energy space associated to \eqref{GP} is given by 
\[
\Sigma = \{ u \in H^1(\R^d) \, : \,  |x|u \in L^2(\R^d)\},
\]
equipped with the norm
\[
\|u\|_{\Sigma}^2=\|u\|_{L^2}^2+\|\nabla u\|_{L^2}^2+\||x|u\|_{L^2}^2\,.
\]
We also impose the following 
sub-criticality condition on the nonlinearity:
\begin{hyp} \label{hyp1}
One of the following holds:
\begin{itemize}
\item $a > 0$ (defocusing) and $0 < \sigma < \frac{2}{(d-2)_{+}}$, or
\item $a<0$ (focusing) and $0 < \sigma < \frac{2}{d}$.
\end{itemize}
\end{hyp}
Under these hypotheses, the existence of a unique global in-time solution $\psi\in C(\R_t; \Sigma)$ to \eqref{GP} has been proved in \cite{AMS}. 
In particular, the restriction $\sigma < \frac{2}{d}$ in the focusing case ($a<0$) ensures that {\it no finite-time blow-up} can occur.
In addition, the global solution $\psi(t, \cdot)\in \Sigma$ is known to conserve the {\it total mass}, i.e.
\begin{equation}\label{mass}
N(\psi(t, \cdot)) = \int_{\R^d} |\psi(t, x)|^2 \, dx =N(\psi_0), \quad \forall \, t\in \R,
\end{equation}
as well as 
\begin{equation}\label{econ}
E_\Omega(\psi(t, \cdot)) = E_\Omega(\psi_0),  \quad \forall \, t\in \R,
\end{equation}
where $E_\Omega$ denotes the associated {\it Gross-Pitaevskii energy}:
\begin{equation}\label{GPF}
E_\Omega(\psi)= \int_{\R^d} \frac12 |\nabla \psi|^2 + V(x) |\psi|^2 + \frac{a}{\sigma +1} |\psi|^{2\sigma +2} - \overline \psi (\Omega\cdot L) \psi \, dx.
\end{equation}
Note that the last term within $E_\Omega$ is sign indefinite.

In the following, we shall focus on various stability and/or instability properties of solutions $\psi$ to \eqref{GP}: 
Our first task will be to study the orbital stability of nonlinear {\it ground states} associated to \eqref{GP}. These are solutions to \eqref{GP} given by
\[
\psi(t,x) = e^{- i \mu t} \varphi(x),\quad \mu \in \R,
\]
where $\varphi$ is obtained as a constrained minimizer of the energy functional $E_\Omega(\varphi)$. In \cite{IM, RS1, RS2}, the onset of vortex nucleation is linked to a symmetry breaking phenomenon for 
minimizers of $E_\Omega(\varphi)$, which is proved to happen for $|\Omega|$ above a certain critical speed $\Omega_{\rm crit}>0$,  
even in the case of radially symmetric traps $V$ with $\omega_1=\omega_2=\omega_3$ (see Section \ref{sec:stab} for more details). 
In our first main result below, we shall prove that 
under Assumption \ref{hyp1} and for $|\Omega|< \omega$, 
the set of all energy minimizers is indeed orbitally stable under the time-evolution of \eqref{GP}. In turn, this will allow us to conclude 
several new results of orbital stability for a class of rotating solutions to nonlinear Schr\"odinger equations {\it without} the angular momentum term $\propto \Omega$. 

The question of whether the condition $|\Omega|< \omega$ is only needed for the existence of ground states, or also has a nontrivial effect in the solution of the 
time-dependent equation \eqref{GP}, then leads us to our second line of investigation. A theorem based on the Ehrenfest equations associated to \eqref{GP}, shows that
in the case of {\it non-istotropic potentials} $V$, a resonance-type phenomenon can occur for $|\Omega|\ge \omega$. This leads to 
solutions $\psi$ whose $\Sigma$-norm is growing (forward or backward) in time with a rate that can even be exponential, depending on the choice of $\Omega$ and $\omega_j$. 
Physically, this can be interpreted as a manifestation of non-trapped solutions of \eqref{GP} whose mass is pushed towards spatial infinity.\\

The paper is organized as follows: In Section \ref{sec:exist} below we shall prove the existence of nonlinear ground states. Their orbital stability 
(and several further consequences) is proved in Section \ref{sec:stab}. Finally, we turn to the analysis of possible resonances in Section \ref{sec:instab}.


\section{Existence of ground states}\label{sec:exist}

In this section we shall prove the existence of time-periodic solutions $\psi(t,x) = e^{- i \mu t} \varphi(x)$ to \eqref{GP}, which satisfy the following nonlinear elliptic 
equation
\begin{equation}\label{eq:stat}
\mu \varphi = \Big(-\frac{1}{2}\Delta + V(x) - (\Omega \cdot L)\Big) \varphi + a|\varphi|^{2\sigma}\varphi.
\end{equation}
Note that if $\varphi $ solves this equation, then so does $\varphi e^{i \theta}$ with $\theta \in \R$, i.e., we have symmetry under gauge transformations.

For any given total mass $N>0$, a particular class of solutions $\varphi\in \Sigma$ to \eqref{eq:stat}, called ground states, 
is obtained by considering the following constrained minimization problem:
\begin{equation}\label{min}
e(N,\Omega) := \inf\lbrace {E}_{\Omega}(\varphi) : \varphi \in \Sigma, \ N(\varphi) = N \rbrace,
\end{equation}
where the infimum can be replaced by a minimum whenever the energy functional \eqref{GPF} is bounded from below. 
In this case $e(N, \Omega)>-\infty$ denotes the ground state energy. 
Note that $E_\Omega(\varphi)$ is well-defined for any $\varphi \in \Sigma$, since 
Assumption \ref{hyp1} and Sobolev's imbedding imply $\Sigma \hookrightarrow L^{2\sigma+2}$ provided $\sigma < \frac{2}{(d-2)_{+}}$. 
Moreover, for any $\gamma > 0$ we have
\begin{align}\label{ang-sig}
\left|\langle \psi, (\Omega\cdot L)\psi\right|  \le \|(\Omega \wedge x)\psi\|_{L^{2}}\|\nabla \psi\|_{L^{2}}  \le \frac{1}{2\gamma}|\Omega|^{2}\| x\psi\|^{2}_{L^{2}} + \frac{\gamma}{2}\|\nabla \psi\|^{2}_{L^{2}},
\end{align}
which in itself follows by rewriting $\Omega \cdot L = (\Omega \wedge x) \cdot \nabla$ and employing Young's inequality.

The existence and orbital stability of ground state solutions will be proved by the same method as in \cite{CLi,CLZ}.  
To this end, we shall first show that the energy functional \eqref{GPF} is coercive, provided the angular velocity $|\Omega|$ 
is less than the smallest trapping frequency:
\begin{proposition}\label{lem:Efun}
Let $|\Omega|< \omega$ and Assumption \ref{hyp1} hold. Then for any $\varphi\in \Sigma$ with $\|\varphi\|_{L^{2}}^{2}=N$, there is a $\delta>0$ such that
\begin{equation}\label{wls}
{E}_{\Omega}(\varphi) \ge \delta \| \varphi  \|_{\Sigma}^{2} - C_{N}\ge 0,
\end{equation}
Moreover, $\varphi\mapsto{E}_{\Omega}(\varphi)$ is weakly lower semicontinuous in $\Sigma$, i.e. 
for $\{ \varphi_{k} \}^{\infty}_{k=1} \subset \Sigma$ such that $\varphi_{k} \rightharpoonup\varphi \in \Sigma$, we have
$$
E_\Omega(\varphi) \le \liminf_{k\to \infty} E_\Omega(\varphi_{k}).
$$
\end{proposition}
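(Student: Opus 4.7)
The plan is to split the two assertions and handle each with a different tool.

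\emph{Coercivity.} I would combine the pointwise bound $V(x) \ge \frac{\omega^{2}}{2}|x|^{2}$ (immediate from the definition of $\omega$) with the estimate \eqref{ang-sig}, applied to the angular momentum term in $E_{\Omega}$. With a free parameter $\gamma > 0$ this yields
\[
E_{\Omega}(\varphi) \ge \frac{1-\gamma}{2}\|\nabla \varphi\|_{L^{2}}^{2} + \frac{1}{2}\Big(\omega^{2} - \frac{|\Omega|^{2}}{\gamma}\Big)\|x\varphi\|_{L^{2}}^{2} + \frac{a}{\sigma+1}\|\varphi\|_{L^{2\sigma+2}}^{2\sigma+2}.
\]
Since $|\Omega| < \omega$, the interval $(|\Omega|^{2}/\omega^{2},\,1)$ is nonempty, and choosing $\gamma$ in it makes both quadratic coefficients strictly positive. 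In the defocusing case $a > 0$ the nonlinear term is non-negative, so \eqref{wls} follows from the mass constraint $\|\varphi\|_{L^{2}}^{2} = N$. In the focusing case $a < 0$, I would control the negative nonlinearity via Gagliardo-Nirenberg,
\[
\|\varphi\|_{L^{2\sigma+2}}^{2\sigma+2} \le C\,\|\nabla \varphi\|_{L^{2}}^{d\sigma}\,\|\varphi\|_{L^{2}}^{2\sigma+2-d\sigma},
\]
and since Assumption \ref{hyp1} forces $d\sigma < 2$ in that case, Young's inequality absorbs this into an arbitrarily small fraction of $\|\nabla \varphi\|_{L^{2}}^{2}$ at the price of a constant depending only on $N$, again giving \eqref{wls}.

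\emph{Weak lower semicontinuity.} The cleanest route is to recast the linear part of $E_{\Omega}$ in rotating-frame form. With $A(x) := \Omega \wedge x$, a short computation (using $\operatorname{div}(\Omega \wedge x) = 0$ and $\Omega \cdot L = -i(\Omega \wedge x)\cdot \nabla$) yields the gauge identity
\[
-\tfrac{1}{2}\Delta - \Omega\cdot L = \tfrac{1}{2}(-i\nabla - A)^{2} - \tfrac{1}{2}|A|^{2},
\]
and hence
\[
E_{\Omega}(\varphi) = \tfrac{1}{2}\|(-i\nabla - A)\varphi\|_{L^{2}}^{2} + \int_{\R^{d}} V_{\rm eff}(x)|\varphi|^{2}\,dx + \tfrac{a}{\sigma+1}\|\varphi\|_{L^{2\sigma+2}}^{2\sigma+2},
\]
where $V_{\rm eff}(x) := V(x) - \tfrac{1}{2}|\Omega \wedge x|^{2} \ge \tfrac{\omega^{2}-|\Omega|^{2}}{2}|x|^{2} \ge 0$ by $|\Omega| < \omega$ together with $|\Omega\wedge x|^{2} \le |\Omega|^{2}|x|^{2}$. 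The first two summands are convex quadratic forms continuous on $\Sigma$, hence weakly lower semicontinuous. For the nonlinear term I would invoke the compact embedding $\Sigma \hookrightarrow L^{2\sigma+2}(\R^{d})$---a consequence of the compactness of the harmonic oscillator resolvent on $L^{2}$ interpolated with the Sobolev embedding, valid under Assumption \ref{hyp1}---so that $\varphi_{k} \to \varphi$ strongly in $L^{2\sigma+2}$ and the nonlinear term in fact converges. Summing the three contributions yields $E_{\Omega}(\varphi) \le \liminf_{k\to\infty}E_{\Omega}(\varphi_{k})$.

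The main obstacle is the sign-indefinite angular momentum term, which by itself is not weakly lower semicontinuous. A naive limiting argument would require strong convergence of $x\varphi_{k}$ in $L^{2}$, but the embedding $\Sigma \hookrightarrow L^{2}(|x|^{2}\,dx)$ is \emph{not} compact, so this approach is dead on arrival. The magnetic-gauge rewriting above circumvents the difficulty by bundling $-\Omega\cdot L$ together with the kinetic energy into a single non-negative, convex functional, while simultaneously exposing the effective potential $V_{\rm eff}$ whose non-negativity is governed precisely by the threshold $|\Omega|<\omega$---the very same hypothesis that underpins the coercivity estimate.
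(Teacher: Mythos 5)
Your proof is correct. The coercivity half is essentially the paper's own argument: the same $\gamma$-splitting of the angular momentum term via \eqref{ang-sig}, the bound $V(x)\ge \tfrac{\omega^2}{2}|x|^2$, positivity of the nonlinear term when $a>0$, and Gagliardo--Nirenberg plus Young when $a<0$ (your exponent $2\sigma+2-d\sigma=2+\sigma(2-d)$ on the $L^2$ factor is the correct one). Where you genuinely diverge is the weak lower semicontinuity. The paper argues abstractly: by \eqref{ang-sig} and \eqref{eq:energybound}, the quadratic part of $E_\Omega$ plus a suitable multiple of $\|\cdot\|_{L^2}^2$ is a Hilbertian norm on $\Sigma$ equivalent to the usual one, hence weakly lower semicontinuous, with the remaining terms handled by the compact embedding of Lemma \ref{lem:cpt}. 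You instead complete the square, using $\diver(\Omega\wedge x)=0$ to write $-\tfrac12\Delta-\Omega\cdot L=\tfrac12(-i\nabla-A)^2-\tfrac12|A|^2$ with $A=\Omega\wedge x$, so that the quadratic part splits into two manifestly non-negative, continuous, convex forms (the magnetic kinetic energy and $\int V_{\rm eff}|\varphi|^2$ with $V_{\rm eff}\ge\tfrac{\omega^2-|\Omega|^2}{2}|x|^2\ge 0$), each weakly lower semicontinuous for free; both routes still need the compact embedding $\Sigma\hookrightarrow L^{2\sigma+2}$ to pass to the limit in the (possibly negative) nonlinear term. Your gauge decomposition buys a more transparent explanation of the threshold $|\Omega|<\omega$ (it is exactly the condition $V_{\rm eff}\ge0$) and an independent second proof of coercivity, at the cost of a computation the paper avoids; the paper's equivalent-norm argument is shorter but more implicit. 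Your closing observation---that the angular momentum term alone is not weakly lower semicontinuous because $\Sigma\hookrightarrow L^2(|x|^2\,dx)$ fails to be compact---is correct and explains why some such repackaging (yours or the paper's) is unavoidable.
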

\begin{proof}
The coercivity follows from \eqref{ang-sig} and the fact that $V(x) \ge \frac{1}{2}\omega^{2}|x|^{2}$ where $\omega>0$ is defined above. Thus one finds, for $0<\gamma <1$:
\begin{equation}\label{eq:energybound}
{E}_{\Omega}(\varphi) 
\ge \frac{1-\gamma}{2} \|\nabla \varphi\|_{L^{2}}^{2}+ \frac{1}{2}\left(\omega^{2} -\frac{|\Omega|^{2}}{\gamma}\right)\|x\varphi\|^{2}_{L^{2}} + \frac{a}{\sigma + 1}\|\varphi\|_{L^{2\sigma +2}}^{2\sigma +2}.
\end{equation} 
In the case $a>0$, we directly obtain
$$
E_\Omega(\varphi) \ge \frac{1-\gamma}{2} \|\nabla \varphi\|_{L^{2}}^{2}+ 
\frac12\left(\omega^{2} -\frac{|\Omega|^{2}}{\gamma}\right)\|x\varphi\|^{2}_{L^{2}} \ge \delta\|\varphi\|_{\Sigma}^{2} - \frac{N}{2}\,,
$$
where we choose $\gamma\in(0,1)$ such that $\frac{|\Omega|^2}{\omega^2}<\gamma<1$, and we set
$$
\delta = \min \left\{ \frac{1-\gamma}{2},\frac12\left(\omega^{2}-\frac{|\Omega|^{2}}{\gamma}\right) \right\}>0\,.
$$
In the case $a<0$, we first note from the Gagliardo-Nirenberg inequality that 
\begin{equation}\label{eq:GN}
\|u\|_{L^{2\sigma +2}}^{2\sigma +2} \le C_{\sigma,d}\|\nabla u\|_{L^{2}}^{d\sigma} \|u\|_{L^{2}}^{2+\sigma(d-2)},
\end{equation}
with the optimal constant $C_{\sigma,d}>0$ obtained in \cite{MW}, i.e., 
$$
C_{\sigma,d} = \frac{\sigma +1}{ \| Q \|_{L^{2}}^{2\sigma}}\,,
$$
where $Q$ satisfies 
$$
\frac{d\sigma}{2} \Delta Q - \Big( 1 + \frac{\sigma(d-2)}{2} \Big)Q + Q^{2\sigma +1} = 0.
$$
Then applying \eqref{eq:GN} to \eqref{eq:energybound} and employing Young's inequality with 
$$
(p,q) = \Big(\frac{2}{d\sigma}, \frac{1}{1-d\sigma/2} \Big)
$$ 
yields the following lower bound for any $\varepsilon>0$:
\begin{equation*}
\begin{aligned}
E_\Omega(\varphi) &\ge \frac{1-\gamma}{2} \|\nabla \varphi\|_{L^{2}}^{2} - \frac{|a|}{\| Q \|^{2\sigma}_{L^{2}}}\|\nabla \varphi\|_{L^{2}}^{d\sigma}\|\varphi\|_{L^{2}}^{2+\sigma(d-2)}  + \frac{1}{2}\left(\omega^{2} - \frac{|\Omega|^{2}}{\gamma}\right)\|x\varphi\|^{2}_{L^{2}}\\
&\ge \Big( \frac{1-\gamma}{2} - \frac{d\sigma|a|\eps^{p}}{2\| Q \|^{2\sigma}_{L^{2}}}\Big)\|\nabla \varphi\|_{L^{2}}^{2} + \frac{1}{2}\left(\omega^{2} -\frac{|\Omega|^{2}}{\gamma}\right)\|x\varphi\|^{2}_{L^{2}}\\ 
&\qquad- \frac{|a|(1-\frac{d\sigma}{2})}{\| Q \|^{2\sigma}_{L^{2}}\eps^{q}}\|\varphi\|_{L^{2}}^{\frac{2+\sigma(d-2)}{1-\frac{d\sigma}{2}}}\,.
\end{aligned}
\end{equation*}
Now recall $p = \frac{2}{d\sigma} > 1$ and choose $\gamma\in(0,1)$, as above, 
such that $\frac{|\Omega|^2}{\omega^2}<\gamma<1$, and then $\eps>0$ such that 
$$
\frac{d\sigma|a|\eps^{p}}{2\| Q \|^{2\sigma}_{L^{2}}} = \frac{1-\gamma}{4}\,.
$$
After recalling that $\|\varphi\|_{L^{2}}^{2} = N$, we find
\begin{align*}
E_\Omega(\varphi) &= \frac{1-\gamma}{4}\|\nabla \varphi\|_{L^{2}}^{2} + 
\frac{1}{2}\left(\omega^{2} -\frac{|\Omega|^{2}}{\gamma}\right)\|x\varphi\|^{2}_{L^{2}} - \frac{|a|(1-\frac{d\sigma}{2})}{\| Q \|^{2\sigma}_{L^{2}}\eps^{q}}N^{\frac{2+\sigma(d-2)}{2-d\sigma}} \\
&\ge \tilde{\delta}\|\varphi\|_{\Sigma}^{2} - C(a,d,\sigma,N,\| Q \|^{2\sigma}_{L^{2}})
\end{align*}
where 
$$
\tilde \delta = \min \left\{ \frac{1-\gamma}{4},\frac12\left(\omega^{2}-\frac{|\Omega|^{2}}{\gamma}\right) \right\}\,.
$$
Moreover, since the $\Sigma$-norm is weakly lower semicontinuous, the estimate \eqref{wls} directly implies the same 
holds for $E_\Omega$, since its quadratic part together with a multiple of the $L^2$-norm forms a norm on $\Sigma$ equivalent to the usual one.
\end{proof}

To proceed further, we recall the following compactness result (see, e.g., \cite{HHMS, ZGJ}). 

\begin{lemma}\label{lem:cpt}
For $2 \le q <  \frac{2d}{(d - 2)_{+}}$, the embedding
$
\Sigma \hookrightarrow L^{q}
$
is compact.
\end{lemma}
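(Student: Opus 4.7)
The plan is to exploit the two features of $\Sigma$: the $H^1$-bound, which gives local compactness via the classical Rellich--Kondrachov theorem, together with the weighted bound $\||x|u\|_{L^2}<\infty$, which forces tightness of the $L^2$-mass at spatial infinity.

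Let $\{u_n\}\subset \Sigma$ be a bounded sequence with $\|u_n\|_{\Sigma}\le M$. By reflexivity of $\Sigma$ and Banach--Alaoglu, I pass to a subsequence with $u_n\rightharpoonup u$ in $\Sigma$. For every $R>0$, the Rellich--Kondrachov theorem gives $H^1(B_R)\hookrightarrow L^q(B_R)$ compactly for $2\le q<\tfrac{2d}{(d-2)_+}$, so $u_n\to u$ strongly in $L^q(B_R)$.

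The tightness step is the genuinely new ingredient: from $\||x|u_n\|_{L^2}\le M$ and Chebyshev one gets
\begin{equation*}
\int_{|x|>R}|u_n|^2\,dx \le \frac{1}{R^2}\int_{\R^d} |x|^2|u_n|^2\,dx\le \frac{M^2}{R^2},
\end{equation*}
uniformly in $n$, and the same bound holds for $u$ by weak lower semicontinuity. Combining this with the local $L^2$-convergence on $B_R$ and letting $R\to\infty$ yields $u_n\to u$ in $L^2(\R^d)$, which handles the case $q=2$.

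For $2<q<\tfrac{2d}{(d-2)_+}$ I would interpolate: fix $\bar q$ with $q<\bar q<\tfrac{2d}{(d-2)_+}$ and write $\tfrac{1}{q}=\tfrac{\theta}{2}+\tfrac{1-\theta}{\bar q}$ for some $\theta\in(0,1)$, so that
\begin{equation*}
\|u_n-u\|_{L^q} \le \|u_n-u\|_{L^2}^{\theta}\,\|u_n-u\|_{L^{\bar q}}^{1-\theta}.
\end{equation*}
The second factor is bounded by the continuous Sobolev embedding $H^1\hookrightarrow L^{\bar q}$ (valid since $\bar q<\tfrac{2d}{(d-2)_+}$), while the first tends to $0$ by the previous step, giving strong convergence in $L^q$. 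The only subtlety I foresee is making sure the tightness and local compactness are correctly patched together; this is routine via a cut-off around $B_R$ but is the step where the weight $|x|$ in the definition of $\Sigma$ is essential, so it is the place where one must be careful.
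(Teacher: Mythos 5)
Your proof is correct; the paper does not actually prove this lemma but simply recalls it with a citation to \cite{HHMS, ZGJ}, and your argument (local compactness from Rellich--Kondrachov on balls, tightness of the $L^2$-mass from the uniform bound on $\||x|u_n\|_{L^2}$, then H\"older interpolation between $L^2$ and a higher subcritical $L^{\bar q}$ controlled by the Sobolev embedding) is exactly the standard proof found in those references. The only point left implicit is the routine diagonal extraction over a sequence $R_j\to\infty$ so that a single subsequence converges strongly on every ball, which you have correctly identified as the place where the weight $|x|$ is essential.
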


Using this we can prove existence of a (constrained) minimizer.

\begin{proposition}\label{prop:exist1}
Let $|\Omega|< \omega$ and Assumption \ref{hyp1} hold. Then for a given $N>0$, there exists a $\varphi_{\infty} \in \Sigma$ such that $\|\varphi_\infty\|_{L^2}^2=N$ and
$$
{E}_{\Omega}(\varphi_{\infty})=\min_{\varphi \in \Sigma}{E}_{\Omega}(\varphi)=e(\Omega,N).
$$
In addition, $\varphi_\infty$ is a weak solution to \eqref{eq:stat} with $\mu \in \R$ a Lagrange multiplier 
associated to the mass constraint.
\end{proposition}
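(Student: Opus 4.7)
The plan is to apply the direct method in the calculus of variations, exactly as foreshadowed by the setup of Proposition \ref{lem:Efun} and Lemma \ref{lem:cpt}. Let $\{\varphi_k\}_{k\ge 1}\subset \Sigma$ be a minimizing sequence for \eqref{min}, so that $\|\varphi_k\|_{L^2}^2 = N$ for all $k$ and $E_\Omega(\varphi_k) \to e(N,\Omega)$ as $k\to\infty$. The coercivity bound \eqref{wls} of Proposition \ref{lem:Efun} immediately yields
\[
\delta\|\varphi_k\|_\Sigma^2 \le E_\Omega(\varphi_k) + C_N,
\]
so $\{\varphi_k\}$ is bounded in the Hilbert space $\Sigma$. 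Extracting a subsequence (not relabeled), we obtain a weak limit $\varphi_\infty\in\Sigma$ with $\varphi_k \rightharpoonup \varphi_\infty$ in $\Sigma$.

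Next I would transfer this weak convergence to strong convergence in the $L^q$ spaces relevant for the two nonlinear ingredients of the problem. By the compact embedding of Lemma \ref{lem:cpt}, applicable since $2 \le 2, 2\sigma+2 < 2d/(d-2)_+$ under Assumption \ref{hyp1}, we have, up to a further subsequence,
\[
\varphi_k \longrightarrow \varphi_\infty \quad \text{strongly in }L^2(\R^d)\cap L^{2\sigma+2}(\R^d).
\]
The strong $L^2$-convergence preserves the mass constraint: $\|\varphi_\infty\|_{L^2}^2 = \lim_k \|\varphi_k\|_{L^2}^2 = N$, so $\varphi_\infty$ is admissible in \eqref{min}. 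The strong $L^{2\sigma+2}$-convergence takes care of the (a priori troublesome, because sign-variable for $a<0$) interaction term. The quadratic terms (kinetic, potential, and angular momentum) are handled at once by the weak lower semicontinuity statement in Proposition \ref{lem:Efun}. Combining these,
\[
e(N,\Omega) \le E_\Omega(\varphi_\infty) \le \liminf_{k\to\infty} E_\Omega(\varphi_k) = e(N,\Omega),
\]
forcing equality; hence $\varphi_\infty$ achieves the infimum in \eqref{min}.

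Finally, to extract the Euler--Lagrange equation \eqref{eq:stat}, I would perform a standard Lagrange multiplier argument. For any test function $\eta \in C_c^\infty(\R^d;\C)$, one considers the family of admissible variations $\varphi_\varepsilon = (\varphi_\infty + \varepsilon\eta)/\|\varphi_\infty+\varepsilon\eta\|_{L^2}\cdot\sqrt{N}$, whose $L^2$-norm is constrained to $\sqrt{N}$. Differentiating $\varepsilon\mapsto E_\Omega(\varphi_\varepsilon)$ at $\varepsilon=0$ and using the minimality of $\varphi_\infty$ yields, after taking real and imaginary parts of $\eta$, the identity
\[
\re\bigl\langle -\tfrac{1}{2}\Delta\varphi_\infty + V\varphi_\infty - (\Omega\cdot L)\varphi_\infty + a|\varphi_\infty|^{2\sigma}\varphi_\infty - \mu\,\varphi_\infty,\, \eta\bigr\rangle = 0
\]
for a real constant $\mu$ (the Lagrange multiplier), i.e., \eqref{eq:stat} holds in the distributional sense; the membership $\varphi_\infty\in\Sigma$ then promotes this to a weak solution.

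The only delicate point is the treatment of the angular-momentum and $L^{2\sigma+2}$ terms in the limit, since the kinetic $\|\nabla\cdot\|_{L^2}^2$ and harmonic $\||x|\cdot\|_{L^2}^2$ pieces are weakly lower semicontinuous for essentially trivial reasons. That delicate point, however, has already been dealt with: the weak lower semicontinuity of $E_\Omega$ is part of Proposition \ref{lem:Efun}, while the nonlinear term is continuous along the minimizing sequence thanks to Lemma \ref{lem:cpt}. No further obstacle remains.
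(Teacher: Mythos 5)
Your proposal is correct and follows essentially the same route as the paper: coercivity from Proposition \ref{lem:Efun} to bound the minimizing sequence, weak compactness in $\Sigma$, the compact embedding of Lemma \ref{lem:cpt} to preserve the mass constraint and control the nonlinear term, weak lower semicontinuity of $E_\Omega$ to conclude that the weak limit is a minimizer, and a first-variation/Lagrange-multiplier computation for the Euler--Lagrange equation. The only item in the paper's proof that you omit is the upgrade from weak to strong $\Sigma$-convergence of the minimizing subsequence (obtained there from norm convergence), which is not needed for the statement itself but is reused later in the orbital stability argument.
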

\begin{proof}
Choose a minimizing sequence $\{ \varphi_{k} \}^{\infty}_{k=1} \subset \Sigma$ such that $\|\varphi_{k}\|_{L^{2}}^{2}=N$.
First we show $\{ \varphi_{k} \}^{\infty}_{k=1}$ is a  bounded sequence in $\Sigma$.
From Proposition \ref{lem:Efun} we know that $0<E_{\Omega}(\varphi_k)<\infty$
and the coercivity implies that any minimizing sequence $\{ \varphi_{k} \}^{\infty}_{k=1}$ is a bounded sequence in $\Sigma$.
By Banach-Alaoglu, there exists a weakly convergent subsequence $\{ \varphi_{k_{j}} \}^{\infty}_{j=1} \subset \{ \varphi_{k} \}^{\infty}_{k=1}  $ such that 
$$
\varphi_{k_{j}} \rightharpoonup \varphi_{\infty}  \quad \mbox{as} \ j \to \infty,
$$
for some $\varphi_{\infty} \in \Sigma$. The compact embedding of Lemma \ref{lem:cpt} implies that  $\varphi_{k_{j}} \to \varphi_{\infty}$ 
strongly (and hence in norm) in $L^{2}$ and in $L^{2\sigma +2}$, provided $\sigma < \frac{2}{(d-2)_{+}}$. In particular
\begin{align}
\|\varphi_{\infty}\|_{L^{2}}^{2} = \lim_{j\to \infty}\|\varphi_{k_{j}}\|_{L^{2}}^{2} = N.
\end{align}
By the lower semicontinuity of the functional $E_\Omega$ we have
$$
E_{N} := \inf_{\varphi \in \Sigma, \| \varphi\|_2^2=N} E_\Omega(\varphi) \le E_\Omega(\varphi_{\infty}) \le \lim_{j\to \infty} \inf E_\Omega(\varphi_{k_{j}}) = E_{N}.
$$
Furthermore, since $e(N, \Omega)\equiv E_\Omega(\varphi_{\infty})=\lim_{j\to \infty}E_\Omega(\varphi_{k_{j}})$, 
we see that $\|\varphi_{k_{j}}\|_{\Sigma} \to \|\varphi_{\infty}\|_{\Sigma}$, as $j \to \infty$. 
Together with the weak convergence of the minimizing sequence this implies strong convergence to some $\varphi_{\infty} \in \Sigma$.

It is then straightforward to compute the first variation $\langle\frac{\delta E_\Omega}{\delta \varphi}, \chi\rangle=0$ to see that a minimizer $\varphi_\infty \in \Sigma$ indeed 
solves \eqref{eq:stat} in the weak sense, i.e.
\[
\mu \int_{\R^d} \overline \varphi_\infty \chi \, dx = \frac{1}{2}\int_{\R^d} \nabla  \overline \varphi_\infty  \cdot \nabla  \chi + V(x)  \overline\varphi_\infty   \chi 
-  \overline\varphi_\infty  (\Omega \cdot L)  \chi+ a|\varphi_\infty |^{2\sigma} \overline\varphi_\infty  \chi\, dx,
\] 
for all $\chi \in \Sigma$.
\end{proof}

\begin{remark}
It is straightforward to generalize all of the results in this section to GP equations with general confinement potentials $V(x)\to +\infty$, as $|x|\to \infty$, provided 
an appropriate energy space $\Sigma$ is chosen.
\end{remark}


\section{Orbital stability}\label{sec:stab}

The set of all ground states with a given mass $N$ will be denoted by
\begin{equation}\label{Gset}
\mathcal G_\Omega = \Big\lbrace \varphi \in \Sigma  : {E}_{\Omega}(\varphi) = e(N,\Omega)  \ {\rm and} \ N(\varphi) = N  \Big\rbrace \ne \emptyset.
\end{equation} 
Recall that, by gauge symmetry, $\varphi \in \mathcal G_\Omega$ if and only if $e^{i \theta}\varphi\in \mathcal G_\Omega$, for some $\theta \in \R$.
In the case without rotation, i.e., $\Omega \equiv 0$, and for radially symmetric potentials $V$ with $\omega_1=\omega_2=\omega_3$, 
one can show that the energy minimizer is indeed radially symmetric and positive on all of $\R^d$, see \cite{HHMS, HiO} and the references therein. 
In other words, in this case
\begin{equation}\label{Gsimple}
\mathcal G_0 = \{ ue^{i \theta}, \ u\equiv u(|x|)>0, \, \theta \in \R \}.
\end{equation}
Moreover, since the action of $\Omega\cdot L$ vanishes on radially symmetric functions, any radially symmetric $\varphi\in \mathcal G_\Omega$ is also 
in $\mathcal G_0$, and hence 
of the form above. However, the symmetry breaking results in \cite{RS1, RS2} imply that for $|\Omega|\not =0$, a minimizer $\varphi_\infty\in \mathcal G_\Omega$ is in general 
{\it not radially symmetric}. More precisely, it is proved 
in there that for $|\Omega|> \Omega_{\rm crit}>0$  
no eigenfunction of the angular momentum operator $ L$ can be a minimizer (and a radial function $u$ is an eigenfunction with zero eigenvalue), 
even if the GP functional is invariant under rotations around the $\Omega$-axis. 
This implies that $\varphi_\infty$ in the case {\it with} rotation cannot be unique (up to gauge transforms), since by rotating a minimizer 
one obtains another minimizer. In this context, an estimate for the critical rotation speed $\Omega_{\rm crit}$ in $d=2$ can be found in \cite{IM}. 
In summary, these results show that $\mathcal G_\Omega$, in general, will be a more complicated set than $\mathcal G_0$. Moreover, $\mathcal G_\Omega$ should 
also be distinguished from the set of rotationally symmetric vortex solutions studied in, e.g., \cite{GAP}.

Our first main result is as follows:

\begin{theorem}[Orbital stability of ground states]\label{thm:OG}
Let $|\Omega|< \omega$ and Assumption \ref{hyp1} hold. Then the set of ground states $\mathcal G_\Omega\ne \emptyset$ is orbitally stable in $\Sigma$.  
That is, for all $\eps>0$ there exists $\delta=\delta(\eps)>0$, such that if $\psi_0\in \Sigma$ satisfies
$$
\inf_{\varphi \in \mathcal G_\Omega}\| \psi_{0} - \varphi \|_{\Sigma} < \delta,
$$
then the solution $\psi\in C(\R_t, \Sigma)$ 
to \eqref{GP} with $\psi (0,x) = \psi_0 \in \Sigma$ satisfies 
$$
\sup_{t \in \R}\inf_{\varphi \in \mathcal G_\Omega}\| \psi(t,\cdot)- \varphi \|_{\Sigma}< \eps.
$$
\end{theorem}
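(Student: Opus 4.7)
The plan is to argue by contradiction using the standard Cazenave--Lions variational method, leveraging the machinery already developed in Section \ref{sec:exist}. Suppose orbital stability fails. Then there exist $\eps_0 > 0$, a sequence of initial data $\psi_{0,n} \in \Sigma$ with $\inf_{\varphi \in \mathcal G_\Omega}\|\psi_{0,n} - \varphi\|_\Sigma \to 0$, and times $t_n \in \R$ such that the corresponding global solutions $\psi_n \in C(\R_t; \Sigma)$ of \eqref{GP} satisfy
\[
\inf_{\varphi \in \mathcal G_\Omega}\|\psi_n(t_n,\cdot) - \varphi\|_\Sigma \ge \eps_0.
\]
By the conservation laws \eqref{mass}--\eqref{econ} and continuity of $E_\Omega$ on $\Sigma$, one has $N(\psi_n(t_n,\cdot)) \to N$ and $E_\Omega(\psi_n(t_n,\cdot)) \to e(N,\Omega)$. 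Setting $\tilde\psi_n = \sqrt{N/N(\psi_n(t_n,\cdot))}\,\psi_n(t_n,\cdot)$, a small rescaling that still leaves $\tilde\psi_n$ at $\Sigma$-distance at least $\eps_0/2$ from $\mathcal G_\Omega$ for $n$ large, one obtains a minimizing sequence for the variational problem \eqref{min}.

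Now I apply the compactness argument from Proposition \ref{prop:exist1} to $\{\tilde\psi_n\}$. The coercivity estimate \eqref{wls} provided by Proposition \ref{lem:Efun} (which requires $|\Omega|<\omega$) bounds $\{\tilde\psi_n\}$ in $\Sigma$, so a subsequence converges weakly in $\Sigma$ to some limit $\varphi_\infty$. The compact embedding of Lemma \ref{lem:cpt} upgrades this to strong convergence in $L^2$ and in $L^{2\sigma+2}$, which preserves the constraint $\|\varphi_\infty\|_{L^2}^2 = N$ and yields convergence of the nonlinear term. Weak lower semicontinuity of $E_\Omega$ on $\Sigma$ then forces $E_\Omega(\varphi_\infty) = e(N,\Omega)$, hence $\varphi_\infty \in \mathcal G_\Omega$.

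The remaining step, and the technical heart of the argument, is to upgrade weak convergence in $\Sigma$ to strong convergence in $\Sigma$. For this, I exploit the observation made at the end of the proof of Proposition \ref{lem:Efun}: under $|\Omega|<\omega$, the quadratic part of the energy,
\[
Q(\varphi) := \tfrac12\|\nabla\varphi\|_{L^2}^2 + \int_{\R^d} V(x)|\varphi|^2\,dx - \re\langle \varphi, (\Omega\cdot L)\varphi\rangle,
\]
combined with $\|\varphi\|_{L^2}^2$, defines a Hilbertian norm on $\Sigma$ equivalent to $\|\cdot\|_\Sigma$. Since $E_\Omega(\tilde\psi_n) \to E_\Omega(\varphi_\infty)$ and the nonlinear piece converges by the strong $L^{2\sigma+2}$ convergence, one gets $Q(\tilde\psi_n) \to Q(\varphi_\infty)$; together with $\|\tilde\psi_n\|_{L^2}\to\|\varphi_\infty\|_{L^2}$, this yields norm convergence $\|\tilde\psi_n\|_\Sigma \to \|\varphi_\infty\|_\Sigma$. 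Weak convergence plus norm convergence in the Hilbert space $\Sigma$ implies strong convergence, contradicting $\inf_{\varphi\in\mathcal G_\Omega}\|\tilde\psi_n - \varphi\|_\Sigma \ge \eps_0/2$.

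The principal obstacle is the sign-indefinite rotation term $-\bar\psi(\Omega\cdot L)\psi$ inside $E_\Omega$, which a priori prevents both coercivity and weak lower semicontinuity. This is precisely what the subcritical rotation hypothesis $|\Omega|<\omega$ overcomes, via the Young-type estimate \eqref{ang-sig} used in Proposition \ref{lem:Efun}. A secondary technical point is justifying the rescaling step carefully so that the minor $L^2$-normalization does not push $\tilde\psi_n$ past the $\eps_0$-threshold, which is routine since $\lambda_n := \sqrt{N/N(\psi_{0,n})} \to 1$ and $\{\psi_n(t_n,\cdot)\}$ is uniformly bounded in $\Sigma$ by coercivity.
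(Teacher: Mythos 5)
Your proof is correct and follows essentially the same route as the paper's: a Cazenave--Lions contradiction argument that uses mass and energy conservation to turn $\psi_n(t_n,\cdot)$ into a minimizing sequence and then invokes the compactness established in the proof of Proposition \ref{prop:exist1} (coercivity, weak convergence, compact embedding, weak lower semicontinuity, and the equivalent Hilbertian norm built from the quadratic part of $E_\Omega$) to extract a strongly convergent subsequence with limit in $\mathcal G_\Omega$. If anything, your write-up is slightly more careful than the paper's, which silently treats $u_k$ as having exact mass $N$, whereas you handle this via the $L^2$-renormalization $\tilde\psi_n$ with $\lambda_n\to 1$.
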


This theorem generalizes earlier results on the orbital stability of standing waves in nonlinear Schr\"odinger equations with (unbounded) potential (see, e.g., \cite{CLi, RF, RO3, HHMS, ZGJ, Z1} and the references therein) to the case with harmonic potential and additional rotation. Note that for $\Omega =0$, the 
simple structure of $\mathcal G_0$, given in \eqref{Gsimple}, allows one to rephrase the infimum over $\mathcal G_0$ as an infimum over $\theta \in \R$.
Also note Theorem \ref{thm:OG} holds for defocusing and focusing nonlinearities 
satisfying Assumption \ref{hyp1} (see also Remark \ref{rem1} below). In this context, we also mention the papers \cite{RF, RO2}, in which the authors study various  
instability properties of standing wave solutions to focusing nonlinear Schr\"odinger equations with potentials. 

\begin{proof}
By way of contradiction, assume that the set of ground states $\mathcal G_\Omega\ne \emptyset$ is unstable. 
Then there exist $\eps_{0}>0$, $\varphi_0 \in \mathcal G_\Omega$, 
a sequence of initial data $\lbrace \psi_{0}^{k} \rbrace_{k \in \N} \subset \Sigma$ satisfying
\begin{align*}
&\| \psi_{0}^{k} - \varphi_0 \|_{\Sigma} \to 0 \quad {\rm as} \quad k \to \infty,
\end{align*}
and a sequence of times $\lbrace t_{k} \rbrace_{k \in \N} \subset \R$, such that
$$
\inf_{\varphi \in \mathcal G_\Omega}\| \psi^{k}(t_{k},\cdot)-\varphi\|_{\Sigma} > \eps_0.
$$
Here $\psi^{k}(t,x) \in C(\R,\Sigma)$ is the unique global solution to \eqref{GP} with initial data $\psi_{0}^{k}$.  
For simplicity set $u_{k}(x) := \psi^{k}(t_{k},x)$. From mass conservation \eqref{mass} we have, as $k\to \infty$:
\begin{equation*}
\|u_{k}\|_{L^{2}}^{2} \equiv \|\psi^{k}(t_{k},\cdot)\|_{L^{2}}^{2} = \|\psi_{0}^{k}\|_{L^{2}}^{2} \xrightarrow{k \to \infty} \|\varphi_0 \|_{L^{2}}^{2} = N.
\end{equation*}
Moreover, by energy conservation \eqref{econ} it also follows that
\begin{equation*}
{E}_{\Omega}(u_{k}) \equiv {E}_{\Omega}(\psi^{k}(t_{k},\cdot)) = {E}_{\Omega}(\psi_{0}^{k}) \xrightarrow{k \to \infty}  {E}_{\Omega}(\varphi_0) = e(N,\Omega).
\end{equation*}
Consequently, the continuity in time implies that $u_k$ is a minimizing sequence in $\Sigma$. 
By the proof of Proposition \ref{prop:exist1}, there exists a subsequence such that $u_{k_{j}} \to \varphi_{\infty} \in \Sigma$ strongly, as $j\to \infty$. Thus 
$$
\inf_{\varphi \in \mathcal G_\Omega}\| \psi^{k_{j}}(t_{k_{j}},\cdot)- \varphi\|_{\Sigma} \le \|u_{k_{j}} - \varphi_{\infty}\|_{\Sigma}  \xrightarrow{j \to \infty}  0,
$$
which contradicts our assumption.
\end{proof}

\begin{remark}\label{rem1}
It is possible to generalize this result to the case of an attractive ($a<0$) mass-critical nonlinearity $\sigma= \frac{2}{d}$, under the assumption that $N< \| Q\|_{L^2}^2$, see, e.g., 
\cite{Z1, Z2} for analogous results in the case without rotation. We shall not go into further details here, but note that the associated question of a blow-up profile as $N\to \| Q\|_{L^2}^2$ in 
the case with rotation has recently been studied in \cite{LNR}. 
\end{remark}

Theorem \ref{thm:OG} has the following interesting consequence: Recall that $\Omega\cdot L$ is the generator of rotations around the 
$\Omega $-axis, in the sense that
\[
e^{-it \Omega \cdot L } u(x)= u\left(e^{t\Theta }x \right), \quad \forall \, u \in L^2(\R^d),
\]
where $\Theta$ is the skew symmetric matrix given by
\[
\Theta=
  \begin{pmatrix}
    0 & |\Omega| \\
    -|\Omega| & 0 
  \end{pmatrix}\ \text{for $d=2$, and } 
  \Theta=
  \begin{pmatrix}
    0 & \Omega_{3} & -\Omega_{2} \\
    -\Omega_{3} & 0 & \Omega_{1} \\
    \Omega_{2} & -\Omega_{1} & 0
  \end{pmatrix} \ \text{for $d=3$.}
\]
Clearly, this is a unitary operator on both $L^2(\R^d)$ and $\Sigma$. It was shown in \cite{AMS} that if $\psi(t,x)$ solves \eqref{GP}, i.e., the GP equation with rotation, then 
\begin{equation}\label{tchange}
\Psi(t,x): = \big(e^{-it \Omega \cdot L } \psi(t,\cdot)\big)(x),
\end{equation}
solves the following nonlinear Schr\"odinger equation with time-dependent potential:
\begin{equation}\label{NLS}
	i \partial_{t}\Psi = - \frac{1}{2}\Delta \Psi+ W_\Omega(t,x)\Psi+ a|\Psi|^{2\sigma}\Psi, \quad \Psi_{\vert{t=0}} = \psi_{0}(x)\,.
\end{equation}
Here, the new potential $W_\Omega$ is given by
\[
W_\Omega(t,x) := e^{-it \Omega \cdot L }V (x)\equiv V\left(e^{t\Theta }x \right).
\]
The global existence result for \eqref{GP} then directly translates to the existence of a unique global solution $\Psi\in C(\R_t; \Sigma)$ to \eqref{NLS} (see also \cite{Car} for related results). 
Moreover, we have that \eqref{NLS} conserves the total mass, i.e., $N(\Psi(t,\cdot)) = N(\psi_0)$ for all $t\in \R$. 
The associated energy, however, is {\it no longer conserved} unless $V(x)$ is rotationally 
or at least axisymmetric w.r.t. $\Omega$, cf. \cite{AMS} for more details.

\begin{corollary}\label{cor:asym}
Under the same assumptions as in Theorem \ref{thm:OG} it holds: 
For all $\eps>0$ there exists $\delta=\delta(\eps)>0$ such that if $\psi_{0} \in \Sigma$ satisfies 
$$
\inf_{\varphi \in \mathcal G_\Omega}\| \psi_{0} - \varphi \|_{\Sigma} < \delta,
$$
then the solution $\Psi\in C(\R_t, \Sigma)$ to \eqref{NLS} with $\psi (0,x) = \psi_0 \in \Sigma$ satisfies 
$$
\sup_{t \in \R}\inf_{\varphi \in \mathcal G_\Omega}\| \Psi(t,\cdot)- e^{-it \Omega \cdot L } \varphi(\cdot)\|_{\Sigma}< \eps.
$$
\end{corollary}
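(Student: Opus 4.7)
The plan is to deduce this corollary directly from Theorem \ref{thm:OG} by exploiting the identity \eqref{tchange}, which transports any solution of \eqref{NLS} back to a solution of \eqref{GP}. Since $\Psi(0,\cdot)=\psi_0=\psi(0,\cdot)$ (as $e^{0\cdot\Omega\cdot L}=\mathrm{Id}$), the initial data for the associated GP solution $\psi(t,\cdot)=e^{-t\Omega\cdot L}\Psi(t,\cdot)$ is precisely the $\psi_0$ appearing in the hypothesis of the corollary. Hence, given the same $\delta=\delta(\eps)$ furnished by Theorem \ref{thm:OG}, the GP solution $\psi$ satisfies
\[
\sup_{t\in\R}\inf_{\varphi\in\mathcal G_\Omega}\|\psi(t,\cdot)-\varphi\|_\Sigma<\eps.
\]

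The crucial remaining point is that the one-parameter group $\{e^{t\Omega\cdot L}\}_{t\in\R}$ acts as an isometry on $\Sigma$. First I would recall that $e^{t\Omega\cdot L}u(x)=u(e^{t\Theta}x)$ with $e^{t\Theta}\in SO(d)$ orthogonal, so both $|x|$ and Lebesgue measure are preserved under the substitution $y=e^{t\Theta}x$. This immediately gives $\|e^{t\Omega\cdot L}u\|_{L^2}=\|u\|_{L^2}$ and $\||x|\,e^{t\Omega\cdot L}u\|_{L^2}=\||x|\,u\|_{L^2}$. For the gradient piece, the chain rule yields $\nabla(u\circ e^{t\Theta})(x)=(e^{t\Theta})^{T}(\nabla u)(e^{t\Theta}x)$; orthogonality of $(e^{t\Theta})^T$ combined with the change of variables again produces $\|\nabla(e^{t\Omega\cdot L}u)\|_{L^2}=\|\nabla u\|_{L^2}$. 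Summing the three contributions shows $\|e^{t\Omega\cdot L}u\|_\Sigma=\|u\|_\Sigma$ for every $t\in\R$ and every $u\in\Sigma$.

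With this isometry in hand, I would apply it to $u=\psi(t,\cdot)-\varphi$ for any $\varphi\in\mathcal G_\Omega$:
\[
\|\Psi(t,\cdot)-e^{t\Omega\cdot L}\varphi\|_\Sigma=\|e^{t\Omega\cdot L}\bigl(\psi(t,\cdot)-\varphi\bigr)\|_\Sigma=\|\psi(t,\cdot)-\varphi\|_\Sigma.
\]
Taking the infimum over $\varphi\in\mathcal G_\Omega$ on both sides and then the supremum in $t$ yields
\[
\sup_{t\in\R}\inf_{\varphi\in\mathcal G_\Omega}\|\Psi(t,\cdot)-e^{t\Omega\cdot L}\varphi\|_\Sigma=\sup_{t\in\R}\inf_{\varphi\in\mathcal G_\Omega}\|\psi(t,\cdot)-\varphi\|_\Sigma<\eps,
\]
which is exactly the claimed bound.

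The only non-cosmetic obstacle is the verification that $e^{t\Omega\cdot L}$ preserves the full $\Sigma$-norm (rather than just $L^2$), because one might initially worry about the non-commutativity of $\Omega\cdot L$ with the multiplication operator $|x|$ or with $\nabla$. However, as outlined above, both potential difficulties disappear once one represents the flow as a spatial rotation and uses orthogonality. Everything else is a transfer of the already-established orbital stability for \eqref{GP} through a unitary change of unknown, so no further analytic input is required beyond Theorem \ref{thm:OG} itself.
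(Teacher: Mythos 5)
Your proof is correct and is exactly the argument the paper intends: the corollary follows from Theorem \ref{thm:OG} by transporting the GP solution through the change of unknown \eqref{tchange}, using that $e^{t\Omega\cdot L}$ acts by an orthogonal spatial rotation and is therefore an isometry of $\Sigma$ (a fact the paper itself asserts). The paper leaves this verification implicit, and your explicit check of the three pieces of the $\Sigma$-norm fills it in correctly.
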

In other words, we have orbital stability of the set $e^{-it \Omega \cdot L }\mathcal G_\Omega$ under the dynamics of \eqref{NLS}. 
To the best of our knowledge, this is the only orbital stability result for nonlinear Schr\"odinger equations with a time-dependent potential available to date. 

In the particular situation where $V$ is {\it rotationally symmetric}, i.e., $V(x) = \frac{1}{2}\omega^2|x|^2$, one finds
\[
W_\Omega(t,x)=V(x), \quad \text{for any $\Omega \in \R^d$}, 
\]
yielding the usual Gross-Pitaevskii equation for (harmonically) trapped Bose gases
\begin{equation}\label{NLSV}
	i \partial_{t}\Psi = - \frac{1}{2}\Delta \Psi+ \frac{1}{2}\omega^2|x|^2+ a|\Psi|^{2\sigma}\Psi, \quad \Psi_{\vert{t=0}} = \psi_{0}(x)\,,
\end{equation}
In contrast to \eqref{NLS}, this equation does conserve the associated 
Gross-Pitaveskii energy, $E_0(\Psi(t,\cdot))=E_0(\psi_0)$, for all $t\in \R$. The orbital stability result proved above then has the following 
consequence:

\begin{corollary}\label{cor:sym}
Let Assumption \ref{hyp1} hold and $V$ be rotationally symmetric. Then
\begin{equation*}
\mathcal O = \cup_{(\Omega\in \R^d; |\Omega|<\omega)} \left( e^{-it \Omega \cdot L }\mathcal G_\Omega\right),
\end{equation*}
is an orbitally stable set of solutions to \eqref{NLSV}.
\end{corollary}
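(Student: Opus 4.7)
The plan is to leverage Corollary \ref{cor:asym} together with the special role of rotational symmetry, which simplifies the problem in two complementary ways. First, I would observe that when $V$ is rotationally symmetric, the operator $\Omega \cdot L$ commutes with multiplication by $V$ (both are invariant under rotations around the $\Omega$-axis), so every term of $E_{\Omega}$ as well as the mass $N$ is invariant under the unitary $e^{s\Omega \cdot L}$ acting on $\Sigma$. Hence the set $\mathcal{G}_{\Omega}$ is itself invariant under $e^{s\Omega\cdot L}$ for every $s \in \R$, and in particular
$$
e^{t\Omega \cdot L}\mathcal{G}_{\Omega} = \mathcal{G}_{\Omega} \qquad \text{for all } t \in \R.
$$
This shows that $\mathcal{O} = \bigcup_{|\Omega|<\omega} \mathcal{G}_{\Omega}$ is in fact \emph{independent of $t$}, eliminating any implicit time dependence in the target set.

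Next I would invoke the second simplification already noted before \eqref{NLSV}: rotational symmetry of $V$ forces $W_\Omega(t,x) = V(x)$ for every $\Omega$, so the equation \eqref{NLS} reduces to \eqref{NLSV} regardless of the choice of $\Omega$. Consequently, for each fixed $\Omega_0 \in \R^d$ with $|\Omega_0| < \omega$, Corollary \ref{cor:asym} applies and yields orbital stability of $e^{t\Omega_0 \cdot L}\mathcal{G}_{\Omega_0} = \mathcal{G}_{\Omega_0}$ under the \emph{same} equation \eqref{NLSV}.

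To assemble the pieces: given $\eps > 0$, suppose $\psi_0 \in \Sigma$ is close to $\mathcal{O}$, so that there exist $\Omega_0$ with $|\Omega_0| < \omega$ and $\varphi_0 \in \mathcal{G}_{\Omega_0}$ with $\|\psi_0 - \varphi_0\|_{\Sigma}$ small. Applying the stability furnished by Corollary \ref{cor:asym} with this $\Omega_0$, the solution $\Psi$ of \eqref{NLSV} satisfies $\sup_{t\in\R} \inf_{\varphi \in \mathcal{G}_{\Omega_0}}\|\Psi(t,\cdot) - \varphi\|_{\Sigma} < \eps$, and since $\mathcal{G}_{\Omega_0} \subset \mathcal{O}$ this immediately gives $\sup_{t} \inf_{\varphi \in \mathcal{O}}\|\Psi(t,\cdot) - \varphi\|_{\Sigma} < \eps$.

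The main obstacle I anticipate is that Corollary \ref{cor:asym} provides a threshold $\delta$ which, a priori, depends on the reference $\Omega_0$: the coercivity constant in Proposition \ref{lem:Efun} degenerates as $|\Omega|\to \omega$, so the contradiction argument behind Theorem \ref{thm:OG} does not automatically give a uniform $\delta=\delta(\eps)$ valid across the entire open ball $|\Omega|<\omega$. To obtain genuine orbital stability of $\mathcal{O}$ rather than a local version near each element, I would either restrict to a compact sub-ball $\{|\Omega|\le \omega -\eta\}$ and re-run the compactness/contradiction argument of Theorem \ref{thm:OG} with an additional sequence $\Omega_k \to \Omega_* \in \{|\Omega|\le \omega-\eta\}$ (extracting along $\Omega_k$ and using continuity of $E_\Omega$ in $\Omega$ to preserve the minimizing property in the limit), or phrase the conclusion as stability in the neighborhood of each ground state, which suffices for the statement as written.
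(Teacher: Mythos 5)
Your argument is correct and follows essentially the same route the paper intends: since rotational symmetry of $V$ forces $W_\Omega(t,x)=V(x)$, equation \eqref{NLS} coincides with \eqref{NLSV} for every $\Omega$, and Corollary \ref{cor:asym} applied with each fixed $\Omega$ (with $|\Omega|<\omega$) yields stability relative to $e^{t\Omega\cdot L}\mathcal G_\Omega\subset\mathcal O$. Your two supplementary observations --- that $e^{t\Omega\cdot L}\mathcal G_\Omega=\mathcal G_\Omega$ as a set by the rotation invariance of $E_\Omega$, and that the threshold $\delta$ furnished by the contradiction argument is a priori only locally uniform in $\Omega$ (degenerating as $|\Omega|\to\omega$), so one should either restrict to $|\Omega|\le\omega-\eta$ or read the conclusion locally --- are both correct and in fact more careful than the paper, which states the corollary without proof.
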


The usual orbital stability result for ground states associated to \eqref{NLSV} applies to $\mathcal G_0$, see, e.g., \cite{CLi}. 
Note that if, for some $\Omega$, all minimizers $\varphi\in \mathcal G_\Omega$ are rotationally symmetric, then 
$e^{-it \Omega \cdot L }\mathcal G_\Omega = \mathcal G_\Omega=\mathcal G_0$. 
However, the results of \cite{IM, RS1, RS2} show that, in general, $\varphi\in \mathcal G_\Omega$ is {\it not} rotationally symmetric, 
in which case $ e^{-it \Omega \cdot L }\mathcal G_\Omega$, does {\it not} contain stationary solutions to \eqref{NLSV} given by $\Psi(t,x) = \Phi(x)e^{i \mu t}$. 
Again, to the best of our knowledge, this is the only orbital stability result for \eqref{NLSV} based on non-stationary solutions.


\section{A resonance-type phenomenon in non-isotropic potentials} \label{sec:instab}

All the preceding results are obtained under the condition $|\Omega|<\omega$, which is necessary for the existence of nonlinear ground states. 
However, one may wonder (in particular in view of Corollary \ref{cor:sym}) if there are any qualitative changes to the time-dependent solution of 
\eqref{GP} for $|\Omega|\ge \omega$. At least in the case of {\it non-isotropic potentials} $V(x)$, we will see below that this is indeed the case.

To this end, we denote for $\psi(t,\cdot)\in \Sigma$, the quantum mechanical mean position and momentum by
\[
X (t) := \int_{\R^d} x|\psi (t,x)|^2\, dx,  \quad P (t) := -i \int_{\R^d} \overline \psi (t,x) \nabla \psi(t,x) \, dx.
\]

\begin{lemma}
Let $\psi\in C(\R_t; \Sigma)$ be a solution to \eqref{GP}, then, for all $t\in \R$:
\begin{equation}\label{ode}
\begin{aligned}
& X (t) = X(0) +\int_0^t P (s) -  \Omega \wedge X  (s)\, ds   \\
& P(t)  =P(0) -\int_0^t \nabla V(X(s)) + \Omega \wedge P(s) \, ds.
\end{aligned}
\end{equation}
\end{lemma}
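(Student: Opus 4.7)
The plan is to derive \eqref{ode} as a pair of Ehrenfest-type identities: differentiate $X(t)$ and $P(t)$ in $t$ under the integral sign, substitute $\partial_t\psi$ from \eqref{GP}, and collect which contributions survive after integration by parts. I would carry out the computation first in a formal, pointwise sense (as if $\psi$ were Schwartz class) and then pass to general $\psi_0\in\Sigma$ by density, using the continuous dependence of the flow in $\Sigma$ established in \cite{AMS}.

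For the first equation, writing the right-hand side of \eqref{GP} as $H\psi$, one has
\begin{equation*}
\dot X_j \;=\; 2\re\int_{\R^d} x_j\bar\psi\,\partial_t\psi\,dx \;=\; 2\im\int_{\R^d} x_j\bar\psi\, H\psi\,dx.
\end{equation*}
The multiplicative real terms $V\psi$ and $a|\psi|^{2\sigma}\psi$ produce real integrands and thus drop out. The kinetic piece $-\tfrac12\int x_j\bar\psi\Delta\psi\,dx$, after one integration by parts, reduces precisely to $P_j$. For the rotation contribution, writing $\Omega\cdot L=-i(\Omega\wedge x)\cdot\nabla$, symmetrizing via $\bar\psi\partial_k\psi+\psi\partial_k\bar\psi=\partial_k|\psi|^2$, integrating by parts once, and using $\diver(\Omega\wedge x)=0$, yields $-(\Omega\wedge X)_j$.

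The second equation is analogous. Either by writing $\dot P_j=2\re\int\overline{H\psi}\,\partial_j\psi\,dx$ directly, or (equivalently) via the nonlinear Heisenberg relation
\begin{equation*}
\dot P_j \;=\; i\bigl\langle\psi, [H,-i\partial_j]\psi\bigr\rangle,
\end{equation*}
one finds the following. The kinetic term vanishes because $[-\tfrac12\Delta,\partial_j]=0$. The nonlinear piece reduces to $\tfrac{a}{\sigma+1}\int\partial_j|\psi|^{2\sigma+2}\,dx=0$ after one integration by parts. The potential term equals $-\int\partial_j V(x)|\psi|^2\,dx$, which coincides with $-(\nabla V(X))_j$ \emph{precisely because} $V$ is quadratic, so $\nabla V$ is linear in $x$ and commutes with the averaging against $|\psi|^2$. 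Finally, the short commutator computation $[\Omega\cdot L,\partial_j]=-i(\Omega\wedge\nabla)_j$, combined with $\int\bar\psi\nabla\psi\,dx=iP$, contributes $-(\Omega\wedge P)_j$.

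The main technical obstacle is the rigorous justification of these integrations by parts for a solution merely in $\Sigma$: a priori $\Delta\psi$ lies only in $H^{-1}$ and $x_j\Delta\psi$ need not be square integrable, so the manipulations above are formal. I would handle this by regularizing the initial datum by a sequence $\psi_0^n\in\mathcal S(\R^d)$ (say by truncation and mollification), applying the identity \eqref{ode} to the corresponding strong solutions of \eqref{GP} for which every step is classical, and passing to the limit using the $\Sigma$-continuous dependence of $\psi(t,\cdot)$ on the data from \cite{AMS} together with the fact that $\psi\mapsto X(\psi)$, $\psi\mapsto P(\psi)$ and $\psi\mapsto\nabla V(X(\psi))$ are continuous on $\Sigma$.
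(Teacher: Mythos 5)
Your proposal is correct and follows essentially the same route as the paper: differentiate $X$ and $P$, substitute $\partial_t\psi$ from \eqref{GP}, observe that the real multiplicative terms (potential and nonlinearity) drop out of $\dot X$ while the nonlinear term integrates to zero in $\dot P$, handle the rotation terms by writing $\Omega\cdot L=-i(\Omega\wedge x)\cdot\nabla$ and integrating by parts with $\diver(\Omega\wedge x)=0$, and justify everything for general $\Sigma$-data by regularization plus continuous dependence from \cite{AMS}. The one point you make explicit that the paper leaves implicit --- that $-\int\nabla V\,|\psi|^2\,dx=-\nabla V(X)$ only because $V$ is quadratic --- is a worthwhile observation but not a divergence in method.
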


This system can be regarded as a generalization of the results in \cite[Section 6]{Oh}, obtained for $\Omega=0$. Note that the nonlinearity does not enter in \eqref{ode}.

\begin{proof} We shall assume that $\psi$ is sufficiently smooth (and decaying) such that all of our computations below are rigorous. 
A classical density argument, combined with the continuous dependence of $\psi$ on its initial data, then allows us to extend the result 
to solutions $\psi\in C(\R_t; \Sigma)$. 

We start by calculating the time derivative of $X$:
\begin{align*}
\dot X &= 2 \re \langle \partial_{t}\psi, x \psi   \rangle = 2 \re \langle i( \tfrac{1}{2}\Delta \psi - V(x)\psi - a|\psi|^{2\sigma}\psi + (\Omega \cdot L) \psi ), x \psi   \rangle \\
&= \re \langle i \Delta \psi , x \psi   \rangle  + 2\re \langle i (\Omega \cdot L) \psi , x \psi   \rangle   + 2 \im \underbrace{\langle  V(x)\psi + a|\psi|^{2\sigma}\psi, x \psi   \rangle}_{\in \R} \\
&\equiv J_{1} + J_{2}.
\end{align*}
An integration by parts then implies
\begin{align*}
J_{1} &=\re \langle -i \nabla \psi , \nabla( x \psi ) \rangle = \im {\langle \nabla \psi , x \nabla\psi  \rangle} + \re \langle -i \nabla \psi , \psi\nabla x  \rangle =P.
\end{align*}
The term $J_2$ can be rewritten using $(\Omega \cdot L) = -i(\Omega \wedge x) \cdot \nabla$ and integration by parts 
\begin{align*}
J_{2} &=  2\re \langle (\Omega \wedge x) \cdot \nabla\psi , x \psi   \rangle = 2\re\sum_{\ell, j =1}^{d}\langle  \partial_{x_{j}}\psi , (\Omega \wedge x)_{j} x_{\ell} \psi   \rangle e_{\ell} \\
&= -2\sum_{j=1}^{d}\langle \psi, (\Omega \wedge x)_{j} \psi\rangle e_{j} -2\re\langle  x \psi , (\Omega \wedge x) \cdot \nabla \psi  \rangle  =-2\langle \psi, (\Omega \wedge x) \psi\rangle -J_{2},
\end{align*}
which implies that 
\begin{equation*}\label{j2}
J_{2} = -\langle \psi, (\Omega \wedge x) \psi\rangle = - \Omega \wedge X .
\end{equation*}
In summary this yields the following equation of motion for $X$:
\begin{equation}\label{Xt}
\dot X = P -  \Omega \wedge X ,
\end{equation}
which is the time-differentiated version of the first equation in \eqref{ode}.

Next, we calculate the time-derivative of $P$ as:
\begin{align*}
\dot P &= 2\re\langle i\partial_{t}\psi, \nabla \psi  \rangle = 2\re\langle  V(x)\psi + a|\psi|^{2\sigma}\psi - \tfrac{1}{2}\Delta \psi -  (\Omega \cdot L)\psi , \nabla \psi  \rangle \\
&\equiv I_{1} + I_{2} + I_{3} +I_{4}.
\end{align*}
For the first term, a straightforward integration by parts yields
\begin{align*}
I_{1} =  -2\re\langle   \nabla \big( V \psi \big),  \psi \rangle = -2\int_{\R^{d}}\nabla V(x) |\psi(t,x)|^{2}\;dx - I_{1},
\end{align*}
which implies 
\begin{equation*}\label{i1}
I_{1} = -\int_{\R^{d}}\nabla V(x) |\psi(t,x)|^{2}\;dx = - \nabla V(X),
\end{equation*}
since $\nabla V(x) = \sum_{j=1}^d \omega_{j}^{2} x_j$. Furthermore, $I_2$ vanishes, since
\begin{align*}
I_{2} =  \frac{a}{\sigma+1} \int_{\R^{d}} \nabla \big(|\psi|^{2(\sigma +1)} \big) \;dx  = 0,
\end{align*}
and one also finds
$
I_{3} = -\re\langle  \Delta \psi, \nabla \psi \rangle= 0.
$
Finally, we compute, using standard vector identities 
\begin{align*}
I_{4} = -2\re\langle  (\Omega \cdot L)\psi, \nabla \psi \rangle = -2\Omega \wedge P  - I_{4},
\end{align*}
which implies that 
\begin{align}\label{Pt}
\dot P  =-  \nabla V(X) - \Omega \wedge P, 
\end{align}
i.e., the differential version of the second line in \eqref{ode}.
\end{proof}

Given that \eqref{ode} constitutes a closed system for $X$ and $P$, one can study its solution independently of \eqref{GP}. As a first step, we 
have the following global existence result.
\begin{lemma}
For any $(X_0, P_0)\in \R^{2d}$, the system \eqref{ode} admits a unique global in-time solution $(X, P)\in C^\infty(\R_t; \R^{2d})$ with $(X(0), P(0))=(X_0, P_0)$.
\end{lemma}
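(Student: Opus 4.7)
My approach is to observe that the system \eqref{ode} is in fact \emph{linear} with constant coefficients, from which existence, uniqueness, and $C^\infty$ smoothness follow at once via the matrix exponential.

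More precisely, since $V(x) = \frac{1}{2}\sum_{j=1}^d \omega_j^2 x_j^2$ as in \eqref{eq:pot}, one has $\nabla V(X) = DX$ with $D := \operatorname{diag}(\omega_1^2,\ldots,\omega_d^2)$. Likewise, the map $Y \mapsto \Omega \wedge Y$ is linear on $\R^d$, represented by a constant skew-symmetric matrix $A$ (closely related to the matrix $\Theta$ appearing after Corollary~\ref{cor:asym}). Writing the differentiated form of \eqref{ode}, namely $\dot X = P - AX$ and $\dot P = -DX - AP$, in terms of the vector $Z := (X,P)^{\mathrm T} \in \R^{2d}$ yields
$$
\dot Z = M Z, \qquad M := \begin{pmatrix} -A & I_d \\ -D & -A \end{pmatrix} \in \R^{2d \times 2d}.
$$

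This is an autonomous linear ODE on $\R^{2d}$ with constant coefficient matrix $M$, so by the standard theory of constant-coefficient linear systems there exists a unique solution $Z \in C^\infty(\R_t; \R^{2d})$ for every initial datum $Z(0) = (X_0, P_0)$, given explicitly by $Z(t) = e^{tM} Z(0)$. Re-integrating $\dot Z = MZ$ from $0$ to $t$ recovers the integral formulation \eqref{ode}. There is essentially no obstacle here: the point of the lemma is less the proof itself than the identification of a closed linear system whose spectral properties (in particular, the interplay between the trap frequencies $\omega_j$ and the rotation speed $|\Omega|$) will drive the resonance analysis carried out in the remainder of Section~\ref{sec:instab}.
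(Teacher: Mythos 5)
Your proposal is correct and is essentially the paper's own argument: the paper likewise rewrites the differentiated system as $\dot\Xi = M_d\Xi$ for a constant matrix $M_d$ (written out entrywise rather than in your block form $\bigl(\begin{smallmatrix} -A & I_d \\ -D & -A \end{smallmatrix}\bigr)$) and concludes via $\Xi(t)=e^{tM_d}\Xi_0$.
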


\begin{proof}
Denote $\Xi= (X, P)^\top$, then \eqref{Xt}, \eqref{Pt} are equal to
\begin{equation}\label{mode}
   \dot{\Xi} = M_d\Xi,\quad \Xi(0) = \Xi_0,
\end{equation}
where $\Xi_0 = (X_0, P_0)^\top$, and
\[
M_2=
  \begin{pmatrix}
   0 & |\Omega| & 1 & 0 \\
   -|\Omega| & 0 & 0 & 1 \\
   -\omega_{1}^{2} & 0 & 0 & |\Omega|  \\
   0 & -\omega_{2}^{2} & -|\Omega| & 0 \\
   \end{pmatrix}\quad \text{for $d=2$},
\]
and
\[
M_3=
   \begin{pmatrix}
   0 & \Omega_{3} & -\Omega_{2} & 1 & 0 & 0\\
   -\Omega_{3} & 0 & \Omega_{1} & 0 & 1 & 0\\
   \Omega_{2} & -\Omega_{1} & 0 & 0 & 0 & 1 \\
   -\omega^{2}_{1} & 0 & 0 & 0 & \Omega_{3} & -\Omega_{2}\\
 0 & -\omega^{2}_{2} & 0 & -\Omega_{3} & 0 & \Omega_{1}\\
   0 & 0 & -\omega^{2}_{3} & \Omega_{2} & -\Omega_{1} & 0
  \end{pmatrix}\quad \text{for $d=3$}.
\]
Equation \eqref{mode} is a linear matrix-valued ordinary differential equation with constant coefficients. Thus, \eqref{mode}, and equivalently \eqref{ode}, admits 
a unique smooth solution given by:
\[
\Xi(t) = e^{tM_d} \Xi_0, \quad \text{for all $t\in \R$.}
\]
\end{proof}

To simplify the following discussion, we shall assume that $\Omega\in \R^3$ is aligned with one of the coordinate axes, say, $\Omega = (0,0, |\Omega|)^\top$.
In this way, \eqref{op2d} automatically holds and thus the two-dimensional situation is included in what follows. 

\begin{proposition} \label{prop:ode}
Let $\Omega = (0,0, |\Omega|)^\top$. Assume that 
\begin{equation}\label{cond}
\omega_1 \not = \omega_2 \ \text{and} \ \min\{\omega_1 , \omega_2\} \le | \Omega | \le \max\{\omega_1 , \omega_2\}.
\end{equation}
Then for all $(X_0,P_0)\in\R^{2d}\setminus \mathcal H$, where $\mathcal H=\mathcal H(\omega_1,\dots,\omega_d,\Omega)$ 
is a linear subspace of $\R^{2d}$, it holds
\[
\lim_{t \to + \infty} |X (t)| = \lim_{t \to + \infty} |P(t)| = +\infty, \quad or \quad \lim_{t \to - \infty} |X (t)| = \lim_{t \to - \infty} |P(t)| =+\infty.
\]
Moreover, if both inequalities in \eqref{cond} are strict, this growth is exponentially fast and  $\dim \mathcal H = 2(d-1)$. 
If, however $|\Omega| \in \{ \omega_1, \omega_2 \}$, then the growth is only linear in time and $\dim \mathcal H = 2d-1$.
\end{proposition}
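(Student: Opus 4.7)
The plan is to exploit the fact that \eqref{ode}--\eqref{mode} is a constant-coefficient linear system, so that $\Xi(t)=e^{tM_d}\Xi_0$ and the long-time behavior is entirely determined by the spectrum and Jordan structure of $M_d$. When $\Omega=(0,0,|\Omega|)^\top$, the equations for $(X_3,P_3)$ in $d=3$ decouple into a harmonic oscillator with eigenvalues $\pm i\omega_3$, contributing a two-dimensional bounded summand to $\mathcal H$; the essential analysis therefore reduces to the $4\times 4$ block $M_2$ acting on $(X_1,X_2,P_1,P_2)$.

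First I would compute $\det(\lambda I - M_2)$ via a $2\times 2$ Schur complement. After simplification this yields
\[
\det(\lambda I - M_2) = \mu^2 + (2|\Omega|^2 + \omega_1^2 + \omega_2^2)\,\mu + (|\Omega|^2 - \omega_1^2)(|\Omega|^2 - \omega_2^2), \qquad \mu:=\lambda^2.
\]
Since $\omega_1 \neq \omega_2$, the discriminant $8|\Omega|^2(\omega_1^2+\omega_2^2)+(\omega_1^2-\omega_2^2)^2$ is strictly positive, so both roots $\mu_\pm$ are real with $\mu_-<0$, and $\mathrm{sgn}(\mu_+)=-\mathrm{sgn}\bigl((|\Omega|^2-\omega_1^2)(|\Omega|^2-\omega_2^2)\bigr)$. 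The strict inequalities in \eqref{cond} give $\mu_+>0$, so $M_2$ has real eigenvalues $\pm\sqrt{\mu_+}$ together with a purely imaginary pair $\pm i\sqrt{-\mu_-}$; the equalities $|\Omega|\in\{\omega_1,\omega_2\}$ give $\mu_+=0$, and the eigenvalues become $0,0,\pm i\sqrt{-\mu_-}$.

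I would then take $\mathcal H$ to be the sum of bounded invariant subspaces. In the strict case $M_2$ is semisimple with one-dimensional unstable/stable eigenspaces $E_\pm$ and two-dimensional center $E_0$; setting $\mathcal H = E_0$ (together with the two-dimensional $(X_3,P_3)$-plane when $d=3$) gives $\dim\mathcal H=2(d-1)$, and any $\Xi_0 \notin \mathcal H$ has nonzero projection on $E_+$ or $E_-$, yielding $|\Xi(t)|\sim e^{\sqrt{\mu_+}|t|}$ as $t\to+\infty$ or $t\to-\infty$. In the boundary case one verifies that the geometric multiplicity of $\lambda=0$ is only $1$: direct inspection of $M_2 v=0$ at $|\Omega|^2=\omega_j^2$ gives a one-dimensional kernel, so the Jordan block at $0$ has size $2$. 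Along the generalized eigenvector $w$ (with $M_2 w=v$) one computes $e^{tM_2}w = w+tv$, producing linear growth in both time directions, and $\mathcal H$ equals the kernel plus the imaginary eigenspace plus the $(X_3,P_3)$-plane, of dimension $1+2+2(d-2)=2d-1$.

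The final point is to confirm that divergence of $|\Xi(t)|$ forces both $|X(t)|$ and $|P(t)|$ to diverge. For any eigenvector $v=(v_X,v_P)$ of $M_2$ with $\lambda\neq 0$, the top block of $M_2 v=\lambda v$ reads $v_P=\lambda v_X+\Omega\wedge v_X$, so $v_X=0$ forces $v_P=0$; conversely, $v_P=0$ substituted into the bottom block yields $(\lambda^2+|\Omega|^2)v_X=0$, incompatible with $\lambda\in\R$ and $|\Omega|>0$. An analogous explicit check applies to $v$ and the generalized eigenvector $w$ in the resonant case. The principal technical delicacy I anticipate is in the borderline regime: one must rigorously rule out the possibility that $\lambda=0$ is semisimple and verify that both the position and momentum blocks of $w$ are nonzero, so that the linear-in-time term in $e^{tM_2}w = w+tv$ is genuinely reflected in both $X$ and $P$.
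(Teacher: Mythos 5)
Your proposal is correct and follows essentially the same route as the paper: reduce to the $4\times4$ block $M_2$, treat the characteristic polynomial as a quadratic in $\mu=\lambda^2$ with positive discriminant, read off the sign of $\mu_+$ from $c=(|\Omega|^2-\omega_1^2)(|\Omega|^2-\omega_2^2)$, and handle the boundary case via the size-$2$ Jordan block at $0$; you are in fact a bit more explicit than the paper in identifying $\mathcal H$ as the bounded (center) spectral subspace and in verifying that the relevant eigenvectors have nonzero position and momentum blocks. The only nitpick is in your last paragraph: since $e^{tM_2}w=w+tv$, it is the blocks of $v$ (the kernel vector, e.g.\ $(1,0,0,\pm|\Omega|)^\top$), not of $w$, that must be checked to be nonzero for the linear growth to appear in both $X$ and $P$.
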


\begin{proof} Observe that for $\Omega = (0,0, |\Omega|)^\top$, the matrix $M_3$ decomposes as a direct sum 
of $M_2$ and the $2\times 2$ matrix
\[
A=
\begin{pmatrix}
   0 &  1  \\
   -\omega_3^2 & 0 \\
\end{pmatrix}.
\]
Thus the characteristic polynomial of $M_3$ is
\[
\det (\lambda - M_3) = \det (\lambda - M_2)\cdot \det (\lambda - A)=\det (\lambda - M_2)\cdot (\lambda^2 + \omega_3^2).
\]
Note that $\lambda^2 + \omega_3^2$ has purely imaginary roots, leading to bounded oscillations in the solution of \eqref{ode}. Thus, for 
both $d=2$ and $d=3$ the characteristic polynomial of $M_2$ is the only possible source of growth in the solution. One finds that
$$
\det (\lambda - M_2)=\lambda^{4} + b\lambda^2+c
$$
with
$$
b=2|\Omega|^{2} + \omega_{1}^{2} + \omega_{2}^{2}
\quad\text{and}\quad
c=\big( |\Omega|^{2} - \omega_{1}^{2}\big)\big(|\Omega|^{2} -\omega_{2}^{2}\big).
$$
As a quadratic polynomial in $\lambda^2$, it has discriminant
\[
D=\big(\omega_1^2-\omega_2^2\big)^2+8|\Omega|^2\big(\omega_1^2+\omega_2^2\big)> 0,
\]
and thus $\lambda^2\in\R$. This implies that a necessary condition for the fact that at least one of the two limits 
\[
\lim_{t\to\pm\infty} |\Xi(t)|=+\infty,
\]
is that $\lambda^2\ge0$. This growth occurs on $\R^{2d}\setminus \mathcal H$, 
where $\mathcal H$ is the orthogonal complement of the eigenspace corresponding to the real eigenvalue(s) $\lambda$.

Computing the roots, we find that since $b>0$, the root
$$
\lambda^2=\frac{-b-\sqrt{b^2-4c}}{2}<0.
$$
In addition, the other root satisfies
$$
\lambda^2=\frac{-b+\sqrt{b^2-4c}}{2}\ge 0,\ \text {if and only if $c\le 0$.}
$$
The latter is equivalent to $\min\{\omega_1 , \omega_2\} \le | \Omega | \le \max\{\omega_1 , \omega_2\}.$

Now if $c<0$ then $\lambda^2>0$. Hence, the system has a positive and a negative simple eigenvalue, implying exponential growth for $t\to \pm \infty$ 
and co-dimension of $\mathcal H$ equal to $2$. The fact that both $X$ and $P$ grow individually can be seen from computing the 
eigenvector $V=(v_1,v_2, v_3, v_4)^\top$ associated to $\lambda$. 
This can be done using the block structure of $M_2$ to derive a new eigenvalue equation for $(v_1, v_2)^\top$, given by
\[
\begin{pmatrix}
   |\Omega|^2-\omega_1^2 &  -2\lambda |\Omega|  \\
   2\lambda |\Omega| & |\Omega|^2-\omega_2^2 \\
\end{pmatrix}
\begin{pmatrix}
v_1\\ v_2
\end{pmatrix} = \lambda^2 \begin{pmatrix}
v_1\\ v_2
\end{pmatrix}.
\]
In addition, one finds that 
\[
\begin{pmatrix}
v_3\\ v_4
\end{pmatrix}
= 
\begin{pmatrix}
\lambda & |\Omega| \\ 
- |\Omega| & \lambda
\end{pmatrix}
\begin{pmatrix}
v_1\\ v_2
\end{pmatrix}.
\]
This yields the expression for $V$ after which a straightforward but somewhat tedious analysis leads to the desired conclusion.

When $c=0$ then $\lambda=0$ is a double eigenvalue, in which case one needs to study the dimension $d_0\in \N$ of the 
associated eigenspace. A straightforward computation shows that if $\omega_1 = \omega_2$ (the axisymmetric case), then $d_0 = 2$ is maximal and hence 
the solution does not grow in $t$. By contrast if $\omega_1 \not = \omega_2$, then $d_0 = 1$, and there exists a linearly independent solution 
$\propto t$, stemming from the eigenvector $V=(1,0,0, -|\Omega|)^\top$. 
\end{proof}

\begin{remark} In the case without rotation, i.e. $|\Omega| = 0$, one finds
$$
\lambda^{2} = - \frac{\omega_{1}^{2} + \omega_{2}^{2}}{2} \pm \big|\frac{\omega_{1}^{2}-\omega_{2}^{2}}{2}\big|,
$$
which implies $\lambda = \pm i\omega_{1},\pm i\omega_{2}$, and thus a purely oscillatory solution.  
\end{remark}

We are now in position to prove the second main result of this work.

\begin{theorem}[Resonance in non-isotropic potentials]\label{thm:inst}
Let Assumption \ref{hyp1} hold and $\Omega = (0,0, |\Omega|)^\top$. If condition \eqref{cond} holds and if $\psi_0\in \Sigma$ is such that 
the associated averages $(X_0, P_0)\not \in \mathcal H$, then the solution $\psi\in C(\R_t; \Sigma)$ satisfies
\[
\lim_{t\to +\infty} \| \psi(t, \cdot) \|_{\Sigma} = \infty, \ \text{or} \, \lim_{t\to -\infty} \| \psi(t, \cdot) \|_{\Sigma} = \infty.
\]
\end{theorem}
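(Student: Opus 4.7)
The plan is to combine Proposition \ref{prop:ode}, which controls the moments $(X(t),P(t))$, with elementary Cauchy--Schwarz estimates to transfer the divergence of those moments to a divergence of $\|\psi(t,\cdot)\|_\Sigma$. The crucial structural fact, already established in the lemma preceding Proposition \ref{prop:ode}, is that the moment dynamics \eqref{ode} is decoupled from the nonlinearity and from $\psi$ itself, so the growth of $(X,P)$ is settled without any reference to the full nonlinear solution.

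The main step is the following pointwise-in-time bound. Using mass conservation $\|\psi(t,\cdot)\|_{L^2}^2 = N$ together with Cauchy--Schwarz applied componentwise with respect to the measure $|\psi|^2\,dx$, one obtains
\begin{align*}
|X(t)|^2 &= \sum_{j=1}^d \left|\int_{\R^d} x_j |\psi(t,x)|^2\,dx\right|^2 \le N\,\|x\psi(t,\cdot)\|_{L^2}^2, \\
|P(t)|^2 &= \sum_{j=1}^d \left|\int_{\R^d} \overline{\psi}(t,x)(-i\partial_{x_j})\psi(t,x)\,dx\right|^2 \le N\,\|\nabla \psi(t,\cdot)\|_{L^2}^2,
\end{align*}
and hence, by the definition of the $\Sigma$-norm,
$$
\|\psi(t,\cdot)\|_\Sigma^2 \ge \|x\psi(t,\cdot)\|_{L^2}^2 + \|\nabla \psi(t,\cdot)\|_{L^2}^2 \ge \frac{|X(t)|^2 + |P(t)|^2}{N}.
$$

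Under the stated hypotheses, $(X_0, P_0) \notin \mathcal{H}$ and condition \eqref{cond} holds, so Proposition \ref{prop:ode} applies and yields either $|X(t)|,|P(t)|\to+\infty$ as $t\to+\infty$, or the same divergence as $t\to-\infty$. Inserting this into the inequality above immediately gives $\|\psi(t,\cdot)\|_\Sigma \to +\infty$ in the corresponding time direction, which is the desired conclusion.

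There is essentially no real obstacle: the heavy lifting has already been done by Proposition \ref{prop:ode}, and the Cauchy--Schwarz step only requires the conserved mass. I note in passing that divergence of \emph{either} $|X|$ or $|P|$ alone would already suffice through the above estimate; Proposition \ref{prop:ode} happens to provide both simultaneously, which is more than enough.
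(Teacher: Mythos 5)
Your proposal is correct and follows essentially the same route as the paper: identify the quantum averages $(X(t),P(t))$ of the nonlinear solution with the solution of the closed linear system \eqref{ode}, invoke Proposition \ref{prop:ode} for their divergence, and transfer this to the $\Sigma$-norm via Cauchy--Schwarz and mass conservation. The only cosmetic difference is that you write the Cauchy--Schwarz bound in the squared, summed-over-components form, which makes the final inequality $\|\psi(t,\cdot)\|_\Sigma^2 \ge (|X(t)|^2+|P(t)|^2)/N$ slightly more explicit than the paper's statement.
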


\begin{proof}
Recall that both \eqref{GP} and \eqref{ode} have unique solutions. Thus, if $\psi(t, \cdot)$ solves \eqref{GP} with initial data 
$\psi_0\in \Sigma $ and if $X_0=\langle \psi_0, x \psi_0\rangle$ and 
$P_0=-i \langle \psi_0, \nabla \psi_0\rangle$ are the initial data to \eqref{ode}, then 
\[
X(t)=\langle \psi(t, \cdot), x \psi(t, \cdot)\rangle, \quad P(t)=-i\langle \psi(t, \cdot), \nabla \psi(t, \cdot)\rangle, \quad \forall \, t\in \R.
\]
By Cauchy-Schwarz
\[
|X| \le \| \psi \|_{L^2} \| x \psi \|_{L^2}, \quad |P| \le \| \psi \|_{L^2} \| \nabla \psi \|_{L^2},
\]
which together with the results of Proposition \ref{prop:ode} and the mass conservation property \eqref{mass} implies the assertion of the theorem.
\end{proof}

\begin{remark} The fact that there are nontrivial $\psi_0 \in \Sigma$ for which the associated $(X_0, P_0)\not \in \mathcal H$, can be easily seen by considering
initial data of the form:
\[
\psi_0(x) = e^{i p_0\cdot x} e^{-(x-x_0)^2/2}, \quad x_0, p_0 \in \R^d.
\]
In this case, $X_0=\pi^{d/2} x_0$ and $P_0 = \pi^{d/2} p_0$ and thus one obtains a growing $\Sigma$-norm of the solution $\psi$ provided $(x_0, p_0)\not \in \mathcal H$.
\end{remark}

Indeed, the proof of Proposition \ref{prop:ode} shows that if condition \eqref{cond} holds, there are solutions to \eqref{GP} for which
\[
\| \nabla \psi (t, \cdot) \|_{L^2}, \| x \psi (t, \cdot) \|_{L^2} \to \infty,
\] 
if $t\to +\infty$, or $t\to - \infty$. 
In other words, these solutions develop frequencies which are larger than those controlled by the $\Sigma$-norm and, in addition, their mass is transferred to 
infinity, resulting in a weaker decay of $\psi$. This is in sharp contrast to the case $\omega_1 = \omega_2=\omega_3$, where \eqref{GP} 
is equivalent, up to the time-dependent change of variables \eqref{tchange}, to the classical NLS with harmonic trapping \eqref{NLSV}. The latter 
conserves the energy $E_0(\Psi(t, \cdot))=E_0(\psi_0)$, 
which in the defocusing case $a>0$ directly yields the uniform bound 
\[
\| \Psi (t, \cdot) \|_\Sigma = \| \psi(t,\cdot) \|_{\Sigma} \le E_0(\psi_0), \quad \forall \, t\in \R.
\]

\begin{remark}
The growth of (higher order) Sobolev-norms of solutions to nonlinear Schr\"odinger equations with time-dependent, quadratic potentials 
was also studied in \cite{Car}. 
One can check that \eqref{NLS} (obtained from \eqref{GP}, via the change of variables) falls into the class of models 
for which exponentially growing upper bounds were established in \cite{Car}. Theorem \ref{thm:inst} shows that, in general, such 
exponential growth indeed occurs, and that this is true even for linear Schr\"odinger equations. There exponential growth 
naturally occurs in the case of (even only partially) repulsive harmonic potentials. 
We finally mention that very recently a somewhat 
similar instability phenomenon for linear Schr\"odinger equations with quadratic time-dependent Hamiltonian 
has been established in \cite{BGMR}.
\end{remark}

It is very likely that additional (in-)stability phenomena appear for general $\Omega\in \R^3$, not necessarily aligned to one of the axis. However, the 
calculations of the roots of the associated degree $6$ characteristic polynomial become extremely involved, see also \cite{BS}. Since our main goal was to 
establish an instability result for $\psi$ we do not investigate the general case in full detail. 


%

\bibliographystyle{amsplain}

\end{document}